 \newtheorem{theorem}{Theorem}[section]
 \newtheorem{definition}{Definition}[section]
 \newtheorem{hypothesis}{Hypothesis}[section]
 \newtheorem{lemma}{Lemma}[section]
 \newtheorem{proposition}{Proposition}[section]
 \newtheorem{corollary}{Corollary}[section]
 \newtheorem{example}{Example}[section]
 \newtheorem{Theo}{Theorem}[section]
 \newtheorem{Lemma}[Theo]{Lemma}
 \newtheorem{Rem}[Theo]{Remark}
 \def\bdefinition{\begin{definition}\sl{}\def\edefinition{\end{definition}}}
 \def\beqlb{\begin{eqnarray}}\def\eeqlb{\end{eqnarray}}
 \def\beqnn{\begin{eqnarray*}}\def\eeqnn{\end{eqnarray*}}
 \def\ar{\!\!&}
 \def\mbb{\mathbb}
 \def\proof{\noindent{\it Proof.~~}}
 \def\qed{\hfill$\Box$\medskip}
\newcommand{\bcen}{\begin{center}}
\newcommand{\ecen}{\end{center}}
\newcommand{\bgeqn}{\begin{equation}}
\newcommand{\edeqn}{\end{equation}}
\newcommand{\goto}{\rightarrow}
\newcommand{\bP}{\mathbb{P}}
\newcommand{\bR}{\mathbb{R}}
\newcommand{\de}{\delta}
\newcommand{\ga}{\gamma}
\def\dz{\delta}
\def\ez{\epsilon}
\def\l{\underline{l}}
\def\ra{\rangle}
\def\la{\langle}
\def\r{\right}
\def\l{\left}
\begin{document}

\title{\LARGE\textbf{An integral test on time dependent local
extinction for super-coalescing Brownian motion with Lebesgue
initial measure\footnote{Supported by NSFC(10721091,11071021) and NSERC
grants.}}}

\maketitle

\bigskip

\centerline{Hui He, Zenghu Li and Xiaowen Zhou}

{\narrower{\narrower{\narrower

\begin{abstract}
This paper concerns the almost sure time dependent local extinction
behavior for super-coalescing Brownian motion $X$ with
$(1+\beta)$-stable branching and Lebesgue initial measure on $\bR$.
We first give a representation of $X$ using excursions of a
continuous state branching process and Arratia's coalescing Brownian
flow. For any nonnegative,  nondecreasing and right continuous
function $g$, put
 \[
\tau:=\sup \{t\geq 0: X_t([-g(t),g(t)])>0 \}.
 \]
We prove that $\bP\{\tau=\infty\}=0$ or $1$ according as { whether} the
integral $\int_1^\infty g(t)t^{-1-1/\beta} dt$ is finite or infinite.
\end{abstract}

\bigskip

\bigskip

\noindent\textit{AMS 2000 subject classifications}:Primary 60G57,
60J80; Secondary 60F20

\bigskip

\noindent{Keywords:} super-coalescing Brownian motion, almost sure
local extinction,  excursion representation, integral test.

\par}\par}\par}

\section{Introduction}

{ By a {\it super-coalescing Brownian motion} (SCBM in short) we mean} a
measure-valued stochastic process describing the time-space-mass evolution of
a particle system in $\bR$. In such a system the particles move according to
(instantaneous) coalescing Brownian motions and the masses of those particles
evolve according to independent continuous state  branching processes (CSBPs
in short) with $(1+\beta)$-stable branching law. Whenever two particles are
in the same location their masses are added up with total mass { continuing
with} the independent $(1+\beta)$-stable branching. Note that this scheme is
{ well} defined due to the additivity of the CSBP. For coalescence to happen
{ with a positive probability} we only consider SCBM on $\bR$.

The SCBM has been studied in \cite{DLZ04, LWX04, Zho07}. With
arbitrary finite initial measure it can be obtained by taking a
high-density/small-particle limit of the empirical measure process
of coalescing-branching particle system with Poisson initial
measure. Its probability law can be specified by the duality on
coalescing Brownian motions.

Formally, the SCBM with Radon initial measure $\mu$ can be constructed by
taking a monotone limit of SCBMs with initial measures $\mu$ truncated over
increasing finite intervals. In this paper we will present  a direct
construction of the SCBM using excursions of the CSBP and Arratia's
coalescing Brownian flow following  Dawson et al. \cite{DLZ04}. A similar
construction was { first} proposed in Dawson and Li \cite{DL03} for
superprocess with dependent spatial motion. This procedure allows us to
construct the SCBM simultaneously for all { times} $t\geq 0$.  It turns out
to be handy for the later coupling arguments in proving our main results.

Almost sure {\it local extinction} for super-Brownian motion on $\bR^n$ says
that, given any bounded Borel set in $\bR^n$, almost surely the
measure-valued process does not charge on it after a time long enough. It
often occurs in low dimensions. For super-Brownian motion with Lebesgue
initial measure it was first studied in Iscoe \cite{Isc88} via analyzing the
super-Brownian occupation time using nonlinear PDE { arising from the Laplace
functional}. By time dependent local extinction we mean the local extinction
behavior where the size of the above-mentioned set also depends on $t$.
Almost sure time dependent local extinction was discussed in  Fleischmann et
al \cite{FKX06}. An integral test was found in Zhou \cite{Zho08} on the
almost sure time dependent local extinction for super-Brownian motion with
$(1+\beta)$-stable branching and Lebesgue initial measure. Its proof is a
Borel-Cantelli argument based on estimates of extinction probabilities. The
additivity for super-Brownian motion plays a crucial role in the proof there
since it allows us to decompose one super-Brownian motion into independent
super-Brownian motions for different purposes and then treat them separately.

The SCBM often shares similar asymptotic properties as super-Brownian
motion. In this paper we are going to show that the same integral test in
Zhou \cite{Zho08} is also valid for the SCBM on $\bR$. More precisely,
for any nonnegative, nondecreasing  and right continuous function $g$ on
$\bR^+$, we are going to show that the probability of seeing any mass
over interval $[-g(t), g(t)]$ for time $t$ large enough is either $0$ or
$1$ depending on whether the integral $\int_1^\infty g(t)
t^{-1-1/\beta}dt$ is finite or not.

 It is evident that  the integral test remains the same for the
superprocess with trivial spatial motion. This should not come as a
surprise in view of the law of iterated logarithm of the Brownian
motion. Unfortunately, we were unable to reduce the proof of the
result for the SCBM to the trivial spatial motion case using the law
of iterated logarithm. Instead we give a direct proof.

The main difficulty of the direct proof is that the SCBM is no
longer additive due to the dependence of coalescing spatial motion.
As a result we have to adopt strategies that are quite different
from Zhou \cite{Zho08} to tackle this problem.  The excursion
representation plays an important role in our proof. By the
excursion representation, one SCBM can be represented as sum of
several SCBMs starting from disjoint intervals. The key is that
those SCBMs are not independent. So in one direction of our proof,
we will use a coupling argument by introducing two coalescing
Brownian systems and comparing the asymptotical behaviors of the two
systems since the Borel-Cantelli lemma requires that the events are
independent. More details will be given in the following.

 For { the case} $\int_1^\infty g(t)t^{-1-1/\beta}dt<\infty$ we first choose a
sequence of times $(t_n)$ increasing geometrically. Then
for each $n$ we decompose the SCBM with Lebesgue
initial measure into two (dependent) SCBMs $J^n$ and $K^n$ starting
from Lebesgue measures restricted to interval  $[-2g(t_n), 2g(t_n)]$
 and its complement, respectively. We can show that, for large $n$,
both the probability for $J_t^n$ to survive up to time $t_n$ and the
probability for $K^n_t$ to ever charge interval $[-g(t_n),g(t_n)]$
before time $t_n$  are small enough. Consequently, the almost sure
local extinction with respect to $g$ occurs following a
Borel-Cantelli argument.

In the other direction of our proof, when $g$ increases fast enough we first
choose a sequence of  times $(t_n)$ strictly increasing to $\infty$ and the
associated disjoint intervals $[l_n,r_n], n=1,2,\ldots$ in $\bR^+$. We then
consider an SCBM $\bar{X}$ starting from Lebesgue measure restricted to the
region ${ (\cup_{i=1}^{\infty}[-r_i, -l_i])\cup (\cup_{j=1}^\infty
[l_j,r_j])}$. We are  able to choose the spacings between intervals properly
to satisfy the following constrains. On one hand, the spacing is not too
small so that for each $n$, up to time $t_n$ the mass started from interval
$[-r_n,-l_n]\cup [l_n, r_n]$ at time $0$ is very unlikely to interact with
masses  initiated from { the} other intervals. On the other hand, the spacing
is also not too large so that the process $\bar{X}$ still has enough initial
mass to start with and by time $t_n$ the probability
$\bP\{\bar{X}_{t_n}([-g({t_n}),g({t_n})])>0\}$ is not too small. Then the
proof can be carried out by coupling arguments together with several
Borel-Cantelli arguments.

The approaches developed in this paper can be modified to study the
almost sure time dependent local extinction for SCBM with L\'evy
branching mechanism other than stable branching. But we do not expect the
result to be as clean.

The rest of the paper is arranged as follows. In Section 2 we
present the construction of SCBM using Arratia's flow and the
branching excursion law.  Our main results of integral tests on
almost sure local extinction, Theorem \ref{main-finite} and Theorem
\ref{Main} and their proofs are presented in Section 3.

\section{A construction of SCBM with branching excursions
and Arratia's flow}

Let $\gamma\ge0$ and $0<\beta\leq1$ be fixed constants. A {\it
continuous state critical branching process} (CSBP) with
$(1+\beta)$-stable branching is a right continuous  strong Markov
process taking values in $[0,\infty)$ whose transition semigroup
$(Q_t)_{t\geq0}$ is determined by
 \bgeqn\label{CH3LaplaceQ}
 \int_0^{\infty}e^{-zy}Q_t(x,dy)=\exp\{-x\psi_t(z)\},\quad
 t,x,z\geq0,
 \edeqn
where $\psi_t(z)$ is the unique solution of
 $$
\frac{\partial}{\partial t}\psi_t(z) = \frac{1}{1+\beta}\ga
\psi_t(z)^{1+\beta}, \qquad \psi_0(z) = z.
 $$
It is easy to find that
 $$
\psi_t(z)=z\l(\frac{1+\beta}{1+\beta+\ga\beta
tz^{\beta}}\r)^{1/\beta}.
 $$
In the sequel of the paper, we shall always assume $\ga>0$ unless
otherwise specified. Then for any $t>0$ we have
 \beqlb\label{CH3finite}
\lim_{z\rightarrow\infty}\psi_t(z)
 =
\left(\frac{1+\beta}{\gamma\beta t}\right)^{1/\beta}
 =:
\psi_t(\infty)<\infty.
 \eeqlb
Letting $z\rightarrow\infty$ in (\ref{CH3LaplaceQ}) yields
 \beqnn
Q_t(x,\{0\})
 =
\exp\{-x\psi_t(\infty)\}, \quad t>0,\, x\geq0.
 \eeqnn
From (\ref{CH3LaplaceQ}) it follows that
 $$
 Q_t(x_1+x_2,\cdot)=Q_t(x_1,\cdot)\ast Q_t(x_2,\cdot),\quad
 t,x_1,x_2\geq0.
 $$
In view of this infinite divisibility and (\ref{CH3finite}), we have
 \bgeqn\label{CH3defK}
\psi_t(z) = \int_0^{\infty}(1-e^{-zy})\kappa_t(dy), \quad t>0,
~z\geq0,
 \edeqn
for a family of finite diffuse measures $(\kappa_t)_{t>0}$ on
$(0,\infty)$; see, e.g., Bertoin and Le Gall \cite{BL00}.

A {\it coalescing Brownian flow} $\{\phi(a,t): a\in\mbb{R}, t\geq 0\}$ is by
definition an $\bR$-valued two-parameter process such that for every
$a\in\mbb{R}$, $t\mapsto \phi(a,t)$ is continuous; for every $t\ge 0$,
$a\mapsto \phi(a,t)$ is nondecreasing and right continuous; and for any $n\ge
1$ and $(a_1,\ldots,a_n)\in \mbb{R}^n$ the probability law of $(\phi(a_1,t),
\ldots, \phi(a_n,t))$ follows that of the {\it coalescing Brownian motion}
starting at $(a_1,\ldots,a_n)$; see \cite{Arr79} and \cite{FINR04} for more
details.

Let ${\cal M}$ be the space of Radon measures on $\mbb{R}$ endowed with the
topology of vague convergence. Let ${\cal M}_a$ be the subset of ${\cal M}$
consisting of purely atomic Radon measures. Let ${\cal B}_0$ be the space of
bounded Borel functions on $\mbb{R}$ with bounded supports. Suppose that
$\{(\phi_1(t),\phi_2(t),\ldots): t\ge0\}$ is a countable system of coalescing
Brownian motions and $\{(\xi_1(t),\xi_2(t),\ldots): t\ge0\}$ is a countable
system of independent CSBP's with $(1+\beta)$-stable branching law. We assume
{ that} the two systems are defined on a complete probability space and are
independent of each other. In addition, we assume { that}
$\{\phi_1(0),\phi_2(0),\ldots\}\cap [-l,l]$ is a finite set for every finite
$l\ge 1$. Then we define the ${\cal M}_a$-valued process
 \begin{equation}\label{def-m}
X_t = \sum_{i=1}^\infty \xi_i(t) \delta_{\phi_i(t)}, \qquad t\ge 0.
 \end{equation}
For any $t\ge 0$ let ${\cal G}_t = \sigma({\cal F}^\phi_t\cup {\cal
F}^\xi_t)$, where
 $$
{\cal F}^\phi_t := \sigma(\{\phi_i(s): 0\leq s\leq t;
i=1,2,\ldots\})
 $$
and
 $$
{\cal F}^\xi_t := \sigma(\{\xi_i(s): 0\leq s\leq t; i=1,2,\ldots\}).
 $$

 \begin{Theo}\label{CH3main0}
The process $\{X_t: t\geq0\}$ defined by (\ref{def-m}) is a right
continuous $({\cal G}_t)$-Markov process with transition semigroup
$(P_t)_{t\ge 0}$ given by
 \beqlb\label{laplace0}
\int_{{\cal M}_a}e^{-\la \nu, f\ra} P_t(\mu,d\nu)
 =
{\mbb{E}}\bigg[\exp\bigg\{-\int_{\mbb{R}}
\psi_t(f(\phi(a,t)))\mu(da)\bigg\}\bigg]
 \eeqlb
for $\mu\in {\cal M}_a$ and $f\in {\cal B}_0$, where $\phi(a,t)$ is
a coalescing Brownian flow.
\end{Theo}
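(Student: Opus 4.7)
The plan is to establish the Laplace functional formula (\ref{laplace0}) first, then deduce right continuity, and finally obtain the $({\cal G}_t)$-Markov property by running the same Laplace computation at a generic time. The pivot is the CSBP Laplace formula (\ref{CH3LaplaceQ}), which collapses all the branching randomness once we condition on the entire coalescing flow. Concretely, I would condition on $\cF^\phi_\infty:=\sigma(\cup_t\cF^\phi_t)$ and use that $(\xi_i)$ is independent of $\phi$ while the $\xi_i$ are mutually independent; (\ref{CH3LaplaceQ}) then gives
\[
 \mbb{E}\!\left[e^{-\la X_t,f\ra}\,\big|\,\cF^\phi_\infty\right]
 = \prod_i \exp\{-\xi_i(0)\psi_t(f(\phi_i(t)))\}.
\]
Writing $\mu=\sum_i\xi_i(0)\delta_{\phi_i(0)}$ and using that, by the definition of a countable coalescing system compatible with Arratia's flow, $(\phi_1(t),\phi_2(t),\ldots)$ has the same joint law as $(\phi(\phi_1(0),t),\phi(\phi_2(0),t),\ldots)$, the sum in the exponent equals $\int_\bR\psi_t(f(\phi(a,t)))\mu(da)$. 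Taking expectation then yields (\ref{laplace0}), provided one knows a priori that $X_t\in\M_a$, i.e.\ that $\sum_i\xi_i(t)f(\phi_i(t))<\infty$ almost surely for every $f\in\cB_0$; this finiteness is a separate technical step resting on Gaussian tail bounds for coalescing Brownian travel combined with the Radon property of $\mu$.

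Right continuity of $t\mapsto X_t$ in the vague topology reduces, for $f\in\cB_0$ supported in $[-l,l]$, to right continuity of $s\mapsto\sum_i\xi_i(s)f(\phi_i(s))$; the continuity of each $\phi_i$ together with the same travel bounds allows one to truncate to finitely many indices on a short right-window, and for those indices the right continuity of each $\xi_i$ and the continuity of each $\phi_i$ close the argument. For the Markov property I would repeat the Laplace computation at a generic time $s$: conditional on ${\cal G}_s$, the shifted flow $\phi_i(s+\cdot)$ is a coalescing system starting from the positions $\{\phi_i(s)\}$ and each $\xi_i(s+\cdot)$ is an independent CSBP starting from $\xi_i(s)$, independent of that flow. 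Since $\phi_i(s+t)=\phi_j(s+t)$ whenever $\phi_i(s)=\phi_j(s)$, grouping indices by their common position at time $s$ rewrites the inner conditional exponent as $\int_\bR \psi_t(f(\phi'(a,t)))\,X_s(da)$, where $\phi'$ denotes a fresh coalescing Brownian flow. This matches the right-hand side of (\ref{laplace0}) evaluated at $\mu=X_s$, and the consistency of this regrouping with an SCBM freshly started from $X_s$ is the CSBP additivity $Q_t(x_1+x_2,\cdot)=Q_t(x_1,\cdot)\ast Q_t(x_2,\cdot)$ recorded just before (\ref{CH3defK}); a standard monotone class argument then upgrades the Laplace identity to the full Markov property.

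The hard part will be the coalescence bookkeeping in that last step: the labelled family $(\xi_i,\phi_i)$ is only loosely tied to the unlabelled measure $X_s$, so one must check that after grouping indices by current location the aggregated masses evolve as independent CSBPs with the claimed initial values and remain independent of the post-$s$ flow. CSBP additivity makes this possible, but justifying the measurability of the regrouping and the Fubini-type interchange of the product over $i$ with the flow conditioning will require some care; establishing the Radon property of $X_t$ rigorously from the outset is a secondary technical hurdle.
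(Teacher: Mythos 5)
Your proposal follows essentially the same route as the paper: condition on the coalescing flow, apply the CSBP Laplace formula (\ref{CH3LaplaceQ}) to collapse the branching randomness and identify $(\phi_i(t))$ with the flow evaluated at the initial positions to get (\ref{laplace0}), and invoke CSBP additivity to regroup coalesced particles for the Markov property. The paper's proof is just a terser version of this (conditioning on $\cF^\phi_t$ rather than $\cF^\phi_\infty$, which is immaterial), and the technical points you flag --- finiteness of $\la X_t,f\ra$, right continuity, and the measurable regrouping at a generic time --- are exactly the details the paper dispatches with ``it is easy to see'' and ``a similar calculation.''
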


\proof By the additivity of the CSBP's it is easy to see $\{X_t:
t\geq0\}$ a right continuous $({\cal G}_t)$-Markov process. By the
independence of the two systems $(\phi_1(t), \phi_2(t), \ldots)$ and
$(\xi_1(t), \xi_2(t), \ldots)$ we have
 \beqnn
{\mbb{E}}[\exp\{-\la X_t, f\ra\}]
 \ar=\ar
{\mbb{E}}\bigg[{\mbb{E}}\bigg[\exp\bigg\{-\sum_{i=1}^\infty
\xi_i(t)f(\phi_i(t)) \bigg\}\bigg|{\cal F}^\phi_t\bigg]\bigg] \cr
 \ar=\ar
{\mbb{E}}\bigg[\exp\bigg\{-\sum_{i=1}^\infty \xi_i(0)
\psi_t(f(\phi_i(t)))\bigg\}\bigg] \cr
 \ar=\ar
{\mbb{E}}\bigg[\exp\bigg\{-\sum_{i=1}^\infty \xi_i(0)
\psi_t(f(\phi(a_i,t)))\bigg\}\bigg] \cr
 \ar=\ar
{\mbb{E}}\bigg[\exp\bigg\{-\int_{\mbb{R}}X_0(da)
\psi_t(f(\phi(a,t)))\bigg\}\bigg],
 \eeqnn
where $a_i=\phi_i(0)$ for $i\geq1$ and $\phi(a,t)$ is a coalescing Brownian
flow independent of $(\xi_1(t),\xi_2(t),\ldots)$. A similar calculation shows
that $\{X_t: t\geq0\}$ has { the} transition semigroup $(P_t)_{t\ge 0}$ given
by (\ref{laplace0}).  \qed

{ \bdefinition By a {\it super-coalescing Brownian motion (SCBM)} we mean a
Markov process whose transition semigroup $(P_t)_{t\ge 0}$ is given by
(\ref{laplace0}) with $\mu\in {\cal M}$. \edefinition

From (\ref{laplace0}), it is easy to see that $P_t(\mu, {\cal M}_a)=1$ for $t>0$
and $\mu\in{\cal M}$. }Then the process constructed by (\ref{def-m}) is a
special case. We will give a formulation of the SCBM with an arbitrary
initial state $\mu\in {\cal M}$. To this end, let us review some basic facts
on CSBP's. Let $Q^{0}_t(x,\cdot)$ denote the restriction of the measure
$Q_t(x,\cdot)$ to $(0,\infty)$. Since the origin 0 is a trap for the CSBP,
the family of kernels $(Q^0_t)_{t\geq0}$ also constitutes a semigroup. Based
on (\ref{CH3LaplaceQ}) and (\ref{CH3defK}) one can check that
 \beqnn
 \ar\ar\int_0^{\infty}(1-e^{-zy})\kappa_{s+t}(dy)
 =\int^{\infty}_0\kappa_s(dx)\int_0^{\infty}(1-e^{-zy})Q^0_t(x,dy),
 \quad s,t>0,~z\geq0.
 \eeqnn
Then $\kappa_sQ^0_t=\kappa_{s+t}$. Therefore, $(\kappa_t)_{t>0}$ is
an entrance law for $(Q^0_t)_{t\geq0}$.

Let ${\bf W}$ be the set of right continuous nonnegative functions
on $(0,\infty)$ satisfying $w(t)=0$ for $t\geq\tau_0(w)$, where
$\tau_0(w):=\inf\{s>0: w(s)=0\}$. Let $0$ denote the path that is
constantly zero. Let $\mathfrak{B}({\bf W})$ be the natural
$\sigma$-algebra on ${\bf W}$ generated by the coordinate process
$\{w(s): s>0\}$. By the general theory of Markov processes, there
exits a unique $\sigma$-finite measure ${\bf Q}_{\kappa}$ on $({\bf
W}, \mathfrak{B}({\bf W}))$ such that ${\bf Q}_{\kappa}(\{0\}) = 0$
and
 \beqlb\label{CH3excursion}
 \ar\ar{\bf Q}_{\kappa}(w(t_1)\in dy_1,w(t_2)\in
dy_2,\ldots,w(t_n)\in dy_n)\cr
 \ar\ar\quad
=\kappa_{t_1}(dy_1)Q^0_{t_2-t_1}(y_1,dy_2)\ldots
Q^0_{t_n-t_{n-1}}(y_{n-1},dy_n)
 \eeqlb
for $0<t_1<t_2<\ldots<t_n$ and $y_1,y_2,\ldots,y_n\in (0,\infty)$;
see Proposition~3.5 of Getoor and Glover \cite{GeGl87}. By
Theorem~8.22 of Li \cite{Li11} we have $w(t)\to 0$ as $t\to 0$ for
${\bf Q}_{\kappa}$-almost every $w\in {\bf W}$. Then we can think of
${\bf Q}_{\kappa}$ as a $\sigma$-finite measure on the set ${\bf
W}_0$ of right continuous nonnegative functions on $[0,\infty)$
satisfying $w(0) = w(t)=0$ for $t\geq\tau_0(w)$, where
$\tau_0(w):=\inf\{s>0: w(s)=0\}$. The measure ${\bf Q}_\kappa$ is
known as the {\it excursion law} of the CSBP. Let $\mathfrak{B}({\bf
W}_0)$ and $\mathfrak{B}_t=\mathfrak{B}_t({\bf W}_0)$ denote the
natural $\sigma$-algebras on ${\bf W}_0$ generated by $\{w(s):s\geq
0\}$ and $\{w(s): 0\leq s\leq t\}$, respectively. For $r>0$, let
${\bf Q}_{\kappa,r}$ denote the restriction of ${\bf Q}_{\kappa}$ to
${\bf W}_r:=\{w\in{\bf W}_0:\tau_0(w)>r\}$. Note that
 $$
 {\bf Q}_{\kappa}({\bf W}_r)={\bf Q}_{\kappa,r}({\bf
 W}_r)=\kappa_r(0,\infty)=\psi_r(\infty)<\infty.
 $$
\begin{Lemma}
\label{CH3CSBP} The coordinate process $\{w(t+r): t\geq 0\}$ under
${\bf Q}_{\kappa,r}\{\cdot|\mathfrak{B}_r\}$ is a CSBP with
transition semigroup $(Q_t)_{t\geq 0}$. \end{Lemma}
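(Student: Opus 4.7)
The plan is to verify the Markov property at time $r$ under the excursion measure directly from the finite-dimensional distributions in (\ref{CH3excursion}), then identify the transition kernel as $(Q_t)_{t\geq 0}$. First, fix $0<t_1<\cdots<t_k\le r$ and $0<u_1<\cdots<u_m$, a bounded measurable $F$ on $(0,\infty)^k$ and a bounded measurable $G$ on $[0,\infty)^m$. Applying (\ref{CH3excursion}) to the ordered times $t_1<\cdots<t_k<r<r+u_1<\cdots<r+u_m$ and splitting the chain of kernels at $r$, I would compute
\beqnn
\ar\ar {\bf Q}_{\kappa,r}\bigl[F(w(t_1),\ldots,w(t_k))\,G(w(r+u_1),\ldots,w(r+u_m))\bigr] \\
\ar=\ar \int\!\cdots\!\int F(y_1,\ldots,y_k)\,\kappa_{t_1}(dy_1)\prod_{j=2}^k Q^0_{t_j-t_{j-1}}(y_{j-1},dy_j) \\
\ar\ar\quad\times\,Q^0_{r-t_k}(y_k,dy)\,H_{u_1,\ldots,u_m}(y;G),
\eeqnn
where
\beqnn
H_{u_1,\ldots,u_m}(y;G)=\int G(z_1,\ldots,z_m)\,Q^0_{u_1}(y,dz_1)Q^0_{u_2-u_1}(z_1,dz_2)\cdots Q^0_{u_m-u_{m-1}}(z_{m-1},dz_m).
\eeqnn

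Next, recognizing the first line (with $G\equiv 1$) as the joint law of $(w(t_1),\ldots,w(t_k),w(r))$ under ${\bf Q}_{\kappa,r}$, I would conclude that, for cylinder functions of the indicated form,
\beqnn
{\bf Q}_{\kappa,r}\bigl[G(w(r+u_1),\ldots,w(r+u_m))\bigm|\mathfrak{B}_r\bigr]=H_{u_1,\ldots,u_m}(w(r);G),\qquad {\bf Q}_{\kappa,r}\text{-a.s.}
\eeqnn
A standard monotone class / $\pi$-$\lambda$ argument upgrades this from cylinder functions to all bounded $\mathfrak{B}_r$-measurable test functions on the left, confirming the Markov property of the shifted process under ${\bf Q}_{\kappa,r}\{\cdot\mid\mathfrak{B}_r\}$ with transition kernel determined by the $Q^0_t$.

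Finally, I would reconcile the kernel $Q^0_t$ (which is sub-probabilistic on $(0,\infty)$) with the full CSBP kernel $Q_t$: since paths in ${\bf W}_0$ are absorbed at $0$ after $\tau_0$, the mass $Q_t(x,\{0\})=e^{-x\psi_t(\infty)}$ missing from $Q^0_t$ corresponds precisely to the event that the excursion has already died, so extending $Q^0_t$ by placing this residual mass at $0$ yields $Q_t$ and gives a consistent transition law on $[0,\infty)$ for the shifted coordinate process.  Starting value $w(r)>0$ holds ${\bf Q}_{\kappa,r}$-a.s. because we are on $\{\tau_0>r\}$.  The main (mild) obstacle is bookkeeping: one must carefully keep track of the restriction to ${\bf W}_r$, the identification of the regular conditional probability $H_{u_1,\ldots,u_m}(w(r);\cdot)$, and the passage between $Q^0_t$ and $Q_t$; none of these is deep, but each requires care with measurability and the sub-probability nature of $Q^0_t$.
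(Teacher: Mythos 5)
Your proposal is correct and follows essentially the same route as the paper's proof: both use the finite-dimensional formula (\ref{CH3excursion}), split the chain of kernels at time $r$, account for the sub-probability defect of $Q^0_t$ by placing the missing mass $Q_t(x,\{0\})$ at the trap $0$, and finish with a monotone class argument. If anything, your version is slightly more careful, since you test the conditional expectation against general pre-$r$ cylinder functions rather than only against sets of the form $\{w_r\in A\}$.
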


\noindent\textit{Proof.} This follows from (\ref{CH3excursion}) by
standard arguments. We here give a detailed proof for the convenience
of the reader. For any $A\in \mathfrak{B}((0,\infty))$ we can use
(\ref{CH3excursion}) to see
 \beqnn
{\bf Q}_{\kappa,r}(w(r)\in A,w({r+t})=0)\ar=\ar
 {\bf Q}_{\kappa,r}(w(r)\in A)-{\bf Q}_{\kappa,r}(w(r)\in A,w({r+t})>0)\cr
 \ar=\ar\kappa_r(A)-\int_{A}\kappa_r(dx)Q^0_t(x,(0,\infty))\cr
 \ar=\ar\kappa_r(A)-\int_{A}\kappa_r(dx)(1-Q_t(x,\{0\}))\cr
 \ar=\ar\int_{A}\kappa_r(dx)Q_t(x,\{0\})
 \eeqnn
for $t>0$. Since 0 is a trap, we have
 \beqnn
\ar\ar{\bf Q}_{\kappa,r}(w(r)\in dy,w(t_1)\in dy_1,\ldots,w(t_n) \in
dy_n)\cr
 \ar\ar\quad
= \kappa_{r}(dy)Q_{t_1-r}(y,dy_1)\ldots Q_{t_n-t_{n-1}}
(y_{n-1},dy_n),
 \eeqnn
for $0<r<t_1<t_2<\ldots<t_n$ and $y>0$, $y_1,y_2,\ldots,y_n\in
[0,\infty)$. Thus for $A\in\mathfrak{B}((0,\infty))$ and
$A_1,\ldots,A_n\in \mathfrak{B}([0,\infty))$,
 \beqnn
\ar\ar\int_{\{w(r)\in A\}} {\bf Q}_{\kappa,r} \{w(t_1)\in
A_1,\ldots,w(t_n)\in A_n|\mathfrak{B}_r\}{\bf Q}_{\kappa,r}(dw)\cr
 \ar\ar\quad
=\int_{\{w(r)\in A\}}1\{w(t_1)\in A_1\ldots w(t_n)\in A_n\} {\bf
Q}_{\kappa,r}(dw)\cr
 \ar\ar\quad
=\int_A\int_{A_1}\,\ldots\,\int_{A_n}\kappa_{r}(dy)Q_{t_1-r}(y,dy_1)\ldots
 Q_{t_n-t_{n-1}}(y_{n-1},dy_n)\cr
 \ar\ar\quad
=\int_{\{w(r)\in A\}}\int_{A_1}\ldots\int_{A_n} Q_{t_1-r}(w_r,dy_1)
\ldots Q_{t_n-t_{n-1}}(y_{n-1},dy_n){\bf Q}_{\kappa,r}(dw).
 \eeqnn
Then an application of monotone class theorem yields the desired
result. \qed

We now consider an arbitrary initial measure $\mu\in {\cal M}$.
Suppose that on some complete probability space $(\Omega,\cal F,
\mbb{P})$  we have a coalescing Brownian flow $\{\phi(a,t):
a\in\mbb{R}, t\geq 0\}$ and a Poisson random measure $N(da,dw)$ on
$\mbb R\times{\bf W}_0$ with intensity measure $\mu(da){\bf
Q}_{\kappa}(dw)$. We assume that $\{\phi(a,t)\}$ and $\{N(da,dw)\}$
are independent of each other. Denote the support of $N$ by
$\textrm{supp}(N)=\{(a_i,w_i): i\ge1\}$. For $t\geq0$ let ${\cal
G}_t = \sigma({\cal F}^N_t\cup{\cal F}^\phi_t)$, where
 $$
{\cal F}^N_t := \sigma(\{w_i(s):0\leq s\leq t;i\ge 1\})
 $$
and
 $$
{\cal F}^\phi_t := \sigma(\{\phi(a,s): 0\leq s\leq t;a\in \mbb R\}).
 $$
Then we define the ${\cal M}$-valued process
 \bgeqn\label{CH3defX}
X_t=\int_\mbb{R}\int_{{\bf W}_0}w(t)\dz_{\phi(a,t)}N(da,dw), \quad
t>0,{\quad\text{ and }\quad X_0=\mu.}
 \edeqn

\begin{Theo}\label{CH3main}
{ The process $\{X_t: t\geq0\}$ defined by (\ref{CH3defX}) is an SCBM
starting from $\mu$ }and
 \beqlb\label{laplaceX}
{\mbb{E}}[\exp\{-\la X_{t}, f\ra\}]
 =
{\mbb{E}}\bigg[\exp\bigg\{-\int_{\mbb{R}}
\psi_t(f(\phi(a,t)))\mu(da)\bigg\}\bigg]
 \eeqlb
for $t>0$ and $f\in {\cal B}_0$.
\end{Theo}

\noindent\textit{Proof.} Note that for any $l\ge 1$ and $r>0$ we
have a.s. $m(l,r) := N([-l,l]\times {\bf W}_r)< \infty$. In fact, we
have
 $$
{\mbb{E}}[m(l,r)]
 =
\mu([-l,l]){\bf Q}_{\kappa}({\bf W}_r)
 =
\mu([-l,l]) \kappa_r(0,\infty)< \infty.
 $$
Then, since $\kappa_r(dx)$ is a diffuse measure, given ${\cal G}_r$
we can re-enumerate the set $\textrm{supp}(N)$ into
$\{(a_{k_i},w_{k_i}): i\ge1\}$ so that: (i) $|a_{k_1}|\le
|a_{k_2}|\le \ldots$; and (ii) $|a_{k_i}| = |a_{k_{i+1}}|$ implies
$w_{k_i}(r)< w_{k_{i+1}}(r)$. Note that this enumeration only uses
information from ${\cal G}_r$. As in the proof of Lemma 3.4 of
Dawson and Li \cite{DL03} one can see that $\{w_{k_i}(r+t):
t\geq0;i\ge 1\}$ under ${\mbb{P}}\{\cdot|{\cal G}_r\}$ are
independent CSBP's which are independent of $\{\phi(a,r+t): t\geq
0;a\in \mbb R\}$. Observe that
 \beqnn
X_{r+t} = \sum_{i=1}^\infty w_{k_i}(r+t) \dz_{\phi(a_{k_i},r+t)},
\qquad t\ge 0.
 \eeqnn
Then Theorem~\ref{CH3main0} implies that $\{X_{r+t}: t\geq0\}$ under
${{\mbb P}}\{\cdot|{\cal G}_r\}$ is a right continuous $({\cal
G}_{r+t})$-Markov process with transition semigroup $(P_t)_{t\ge
0}$. Thus $\{X_t: t>0\}$ under the non-conditioned probability
${\mbb{P}}$ is a right continuous $({\cal G}_t)$-Markov process with
transition semigroup $(P_t)_{t\ge 0}$. On the other hand, we have
 \beqnn
{\mbb{E}}[\exp\{-\la X_t, f\ra\}]
 \ar=\ar
{\mbb{E}}\bigg[\exp\bigg\{-\int_{\mbb{R}}\int_{{\bf W}_0}
w(t)f(\phi(a,t)) N(da,dw)\bigg\}\bigg] \cr
 \ar=\ar
{\mbb{E}}\bigg[\exp\bigg\{-\int_{\mbb{R}}\mu(da) \int_0^\infty
\Big(1-e^{-uf(\phi(a,t))}\Big) \kappa_t(du)\bigg\}\bigg] \cr
 \ar=\ar
{\mbb{E}}\bigg[\exp\bigg\{-\int_{\mbb{R}} \psi_t(f(\phi(a,t)))
\mu(da)\bigg\}\bigg].
 \eeqnn
That proves (\ref{laplaceX}) and {  also gives that  for each $t>0$, the
random measure $X_t$ has distribution $P_t(\mu,\cdot)$. We have thus
completed the proof.} \qed

\begin{Rem} Based on (\ref{laplaceX}) one can show $X_t\to \mu$ in probability
(in fact almost surely with a little more work) as $t\to 0$. From the above
construction we see that starting  from an arbitrary initial state in ${\cal
M}$, the SCBM collapses immediately into a purely atomic random measure with
a countable support. Then the masses located at different points evolve
according to independent CSBP's with the supporting points evolving according
to coalescing Brownian motions. { The construction (\ref{CH3defX}) of the
SCBM generalizes Theorem 3.5 of Dawson \textit{et al.} \cite{DLZ04}, where it
was assumed that the SCBM starts from a finite measure on $\mbb R$.}

\end{Rem}

We can also give a useful alternate characterization of the SCBM
following Zhou \cite{Zho07}. For $y_1\leq\ldots\leq y_{2n}\in
\bR^{2n}$, we write $(Y_1(t),\ldots,Y_{2n}(t))$ for a system of
coalescing Brownian motion starting at $(y_1,\ldots,y_{2n})$. Given
$\{a_1,\ldots,a_n\}\subset \bR^n$, throughout this paper we always
put
 \begin{equation}\label{def-h}
h_t(x) := \sum_{j=1}^n a_j1_{]Y_{2j-1}(t),Y_{2j}(t)]}(x), \qquad
t\ge 0, x\in \mbb{R}.
 \end{equation}
By applying Theorem \ref{CH3main} to $f=h_0$ we obtain

 \begin{Theo}\label{CH3main1}
Let $\{X_t: t\geq0\}$ be the SCBM defined by (\ref{CH3defX}). Then for any
$t>0$ we have
 \beqlb\label{laplaceX1}
{\mbb{E}}[\exp\{-\la X_{t},h_0\ra\}]
 =
{\mbb{E}}\bigg[\exp\bigg\{-\int_{\mbb{R}} \psi_t(h_t(a))
\mu(da)\bigg\}\bigg].
 \eeqlb
\end{Theo}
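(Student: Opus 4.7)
The plan is to apply the Laplace identity~(\ref{laplaceX}) of Theorem~\ref{CH3main} (which is of course just the Laplace identity of Theorem~\ref{CH3main0}) with the test function $f = h_0$, and then to convert the resulting functional into the right-hand side of~(\ref{laplaceX1}) by appealing to the Arratia duality between the forward coalescing flow and its generalized right-continuous inverse.

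First I would substitute $f = h_0 \in \mathcal{B}_0$ directly into~(\ref{laplaceX}), which gives
\[
\mathbb{E}[\exp\{-\langle X_t, h_0\rangle\}]
 = \mathbb{E}\bigg[\exp\bigg\{-\int_{\bR} \psi_t(h_0(\phi(a,t)))\mu(da)\bigg\}\bigg].
\]
The task then reduces to showing that the random functional $\int_{\bR} \psi_t(h_0(\phi(a,t)))\mu(da)$ has the same distribution as $\int_{\bR} \psi_t(h_t(a))\mu(da)$. For fixed $t>0$, the map $a \mapsto \phi(a,t)$ is almost surely nondecreasing and right-continuous, so the generalized right-continuous inverse $\hat\phi(y,t) := \sup\{a \in \bR : \phi(a,t) \le y\}$ is well-defined, and the tautology $\phi(a,t) \le y \iff a \le \hat\phi(y,t)$ yields $\{a : \phi(a,t) \in (y_{2j-1}, y_{2j}]\} = (\hat\phi(y_{2j-1},t), \hat\phi(y_{2j},t)]$. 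Therefore
\[
h_0(\phi(a,t)) = \sum_{j=1}^n a_j\,1_{(\hat\phi(y_{2j-1},t), \hat\phi(y_{2j},t)]}(a).
\]

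The closing step is to invoke Arratia's duality for the coalescing Brownian flow: at any fixed time $t > 0$, the joint law of the inverse values $(\hat\phi(y_1,t), \ldots, \hat\phi(y_{2n},t))$ coincides with that of the coalescing Brownian system $(Y_1(t), \ldots, Y_{2n}(t))$ started at $(y_1, \ldots, y_{2n})$. Since $\mu$ is a deterministic Radon measure, this distributional identity transfers from vectors of flow values to the $\mu$-integrals of $\psi_t$ composed with the corresponding indicator sums, giving
\[
\int_{\bR} \psi_t(h_0(\phi(a,t)))\mu(da) \stackrel{d}{=} \int_{\bR} \psi_t(h_t(a))\mu(da),
\]
so that the two Laplace transforms agree and~(\ref{laplaceX1}) follows.

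The main obstacle is the duality step. Identifying the generalized inverse of the Arratia flow as itself a system of coalescing Brownian motions started from $(y_1,\ldots,y_{2n})$ is classical, but requires some care near coalescing events, where distinct $y_j$'s can share a common $\hat\phi$-value. I would either cite Arratia~\cite{Arr79} directly for this finite-dimensional statement, or, following the approach in Zhou~\cite{Zho07}, verify it by checking the finite-dimensional events: the relation $\phi(a,t) \le y \iff a \le \hat\phi(y,t)$ together with the reflection symmetry of Brownian motion forces the $n$-point marginals of the inverse flow to coincide with those of the forward flow.
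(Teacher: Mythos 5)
Your proposal is correct and follows essentially the same route as the paper: the paper's entire proof is the one-line remark that Theorem~\ref{CH3main1} follows ``by applying Theorem~\ref{CH3main0} to $f=h_0$,'' which implicitly invokes the boundary-free coalescing duality of Zhou~\cite{Zho07} to pass from $h_0(\phi(a,t))$ to $h_t(a)$. You simply make that implicit duality step explicit via the generalized inverse $\hat\phi(\cdot,t)$ of the flow (correctly flagging the null-set care needed where the equivalence $\phi(a,t)\le y \iff a\le \hat\phi(y,t)$ can fail at endpoints), which is a standard and legitimate realization of the same duality.
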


The above theorem shows that, the SCBM constructed in Zhou
\cite{Zho07} using approximation is actually the special case with
$\beta=1$ of the SCBM defined by (\ref{CH3defX}).

\section{An integral test on almost sure local extinction for SCBM}

Throughout this paper let $g(t), t>0$, be any nonnegative, nondecreasing and
right continuous function on $[0, \infty)$. Let $X$ be an SCBM. For such a
function $g$ we define the {\it extinction time} as
\[\tau:=\sup\{t\geq 0: X_t([-g(t),g(t)])\neq 0\}\]
with the convention $\sup\emptyset=0$.  Recall a standard result for
Brownian motion: If $\{B_t:t\geq0\}$ is a Brownian motion, then
\bgeqn\label{ineqnBM} {\bf P}\left\{\sup_{0\leq s\leq T}B_s\geq
C\right\}\leq \exp\left\{-\frac{C^2}{2T}\right\},\quad C\geq0,\,
T>0.
 \edeqn

\begin{Theo}\label{main-finite}
Assume { that} $X_0(dx)=dx$ and $\ga>0$. If
\begin{equation}\label{int}
\int_1^\infty g(y)y^{-1-1/\beta} dy<\infty,
\end{equation}
then
\begin{equation}\label{ext}
\bP\{\tau<\infty\}=1.
\end{equation}
\end{Theo}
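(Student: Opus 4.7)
The plan is to cover $[1,\infty)$ by a geometric sequence of time windows and use a first Borel--Cantelli argument based on the occupation-time estimate of Corollary~\ref{rec-ext1}. Choose $s_n = 2^n$, write $g_n := g(s_{n+1})$, and define
\[
A_n := \{Z_t([-g_n,g_n]) = 0 \text{ for all } t\in (s_n, s_{n+1}]\}.
\]
Since $g$ is nondecreasing, $A_n$ forces $Z_t([-g(t),g(t)])=0$ throughout $(s_n,s_{n+1}]$; hence $\{\tau=\infty\}\subseteq\{A_n^c \text{ i.o.}\}$, and it suffices to show $\sum_n \bP(A_n^c) < \infty$.

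Applying Corollary~\ref{rec-ext1} with $a=g_n$, $s_1=s_n$, $s_2=s_{n+1}$ (so $s_2-s_1=s_n$) and using $Z_0([Y_1(s_n),Y_2(s_n)]) = (Y_2(s_n)-Y_1(s_n))^+$ for Lebesgue initial measure, we need a manageable pathwise upper bound on $Y_2(s_n)-Y_1(s_n)$, where $(Y_1,Y_2)$ is a coalescing Brownian motion starting at $(-x-g_n,\,g_n+y)$. Constructing $(Y_1,Y_2)$ from two independent Brownian motions $B_1,B_2$ up to coalescence gives, pointwise,
\[
Y_2(s_n)-Y_1(s_n) \;\le\; \bigl(2g_n + x+y + (B_2(s_n)-B_1(s_n))\bigr)^+ \;\le\; 2g_n + x + y + \sqrt{2s_n}\,|N|,
\]
for a standard normal $N$. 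Writing $c_n := \psi_{s_n}(\infty) = C s_n^{-1/\beta}$, this yields
\[
\bP(A_n) \;\ge\; \frac{2}{\pi s_n}\, e^{-2c_n g_n}\, \bE\bigl[e^{-c_n\sqrt{2s_n}|N|}\bigr] \int_0^\infty\!\!\int_0^\infty e^{-(x^2+y^2)/(2s_n)}\,e^{-c_n(x+y)}\,dx\,dy.
\]
After the scaling $x=\sqrt{s_n}u$, $y=\sqrt{s_n}v$, the integral normalizes: the $\frac{2}{\pi s_n}$ prefactor times $s_n\bigl(\int_0^\infty e^{-u^2/2}du\bigr)^2$ equals $1$, and the corrections coming from the extra factors $e^{-c_n\sqrt{s_n}(u+v)}$ and $e^{-c_n\sqrt{2s_n}|N|}$ contribute only $O(c_n\sqrt{s_n}) = O(s_n^{1/2-1/\beta})$, which is small for $\beta\le 1$.

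Combining these estimates and using $1-e^{-t}\le t$,
\[
1 - \bP(A_n) \;\le\; 2 c_n g_n + O\bigl(s_n^{1/2-1/\beta}\bigr) \;=\; O\bigl(s_n^{-1/\beta} g(s_{n+1})\bigr) + O\bigl(2^{n(1/2-1/\beta)}\bigr).
\]
The second sum is geometric and clearly converges (as $1/2-1/\beta\le -1/2$). The first sum is, by the standard monotone comparison with an integral,
\[
\sum_n s_n^{-1/\beta} g(s_{n+1}) \;\asymp\; \int_1^\infty g(t)\, t^{-1-1/\beta}\,dt \;<\;\infty
\]
by hypothesis. Therefore $\sum_n \bP(A_n^c)<\infty$; the Borel--Cantelli lemma gives $\bP(A_n^c\text{ i.o.})=0$, and hence $\bP(\tau<\infty)=1$. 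The only delicate point, and the main obstacle in turning this sketch into a rigorous proof, is justifying the pathwise bound on $Y_2(s_n)-Y_1(s_n)$ in terms of an independent-Brownian-difference random variable; this is where the explicit construction of the coalescing flow must be invoked to replace coalescence by a pointwise domination before taking expectations.
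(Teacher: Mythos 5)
Your proposal is correct, and it follows the same overall strategy as the paper: geometric time blocks, the occupation-time/duality estimate of Corollary~\ref{rec-ext1} applied on each block, and a single Borel--Cantelli argument. The one genuine difference is how you handle the expectation over the coalescing pair $(Y_1,Y_2)$. The paper truncates on the event $\{-2g(t_{n+1})\le Y_1(t_n)\le Y_2(t_n)\le 2g(t_{n+1})\}$ and pays for the complement with a Gaussian tail bound; that tail is summable only when $g(t)\ge t^{\delta}$ for some $\delta>1/2$, which forces the paper into a two-step reduction (prove the result for $g+t^{\delta}$, then deduce it for $g$). You instead dominate $Y_2(s_n)-Y_1(s_n)$ pathwise by $(B_2(s_n)-B_1(s_n))^+$ for the two independent Brownian motions from which the coalescing pair is built --- this is immediate from the standard construction, since the difference equals $B_2-B_1$ before coalescence and $0$ afterwards, so the ``delicate point'' you flag is not actually an obstacle --- and then compute the Gaussian integrals exactly, obtaining $1-\bP(A_n)\le 2c_ng_n+O(c_n\sqrt{s_n})$ with $c_n\sqrt{s_n}=O(2^{n(1/2-1/\beta)})$ summable for all $0<\beta\le 1$. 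This buys you a cleaner one-pass proof with no auxiliary growth assumption on $g$; the paper's truncation is blunter but requires fewer explicit integral manipulations. Both arguments are sound.
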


\begin{proof} Without loss of generality we may assume that $X$ is defined as
(\ref{CH3defX}). We first  show that (\ref{ext}) holds given $g(t)\geq
t^{\de}$ for some constant $1/2<\de<1$ and for $t$ large enough. Put $t_n:=
e^{n}$. Set
$$
I_g^n:=[-2g(t_{n+1}), 2g(t_{n+1})].
$$
By the excursion representaion, we have that
$$
X_t=\int_{I_g^n}\int_{{\bf
W}_0}w(t)\delta_{\phi(a,t)}N(da,dw)+\int_{{\mbb R}\setminus
I_g^n}\int_{{\bf W}_0}w(t)\delta_{\phi(a,t)}N(da,dw)=:J_t^n+K_t^n.
$$
Note that on the event
$$
\Omega_{t_{n+1}}:=\left\{\sup_{0\leq t\leq
t_{n+1}}\phi(-2g(t_{n+1}),t)<
-g(t_{n+1})\right\}\bigcap\left\{\inf_{0\leq t\leq
t_{n+1}}\phi(2g(t_{n+1}),t)> g(t_{n+1})\right\},
$$
we have $$ X_t([-g(t_{n+1}),g(t_{n+1})]){=}
J_t^n([-g(t_{n+1}),g(t_{n+1})]),\quad { \forall\, }t\leq t_{n+1}.$$ Thus
 \beqnn
&&{\bf P}\{\exists t_n< t\leq
t_{n+1}, X_t([-g(t_{n+1}),g(t_{n+1})])\neq 0\}\\
 &&\qquad
\leq {\bf P}\{\exists t_n<
t\leq t_{n+1}, J_t^n([-g(t_{n+1}),g(t_{n+1})]) \neq 0\}+{\bf P}\{\Omega_{t_{n+1}}^c\}\\
 &&\qquad
= 1-{\bf P}\{J_t^n([-g(t_{n+1}),g(t_{n+1})])= 0, \forall\, t_n<
t\leq t_{n+1}\}+{\bf
P}\{\Omega_{t_{n+1}}^c\} \\
 &&\qquad
\leq 1-{\bf P}\{J_t^n({\mbb R})= 0, \forall\, t_n< t\leq t_{n+1}\}+{\bf
P}\{\Omega_{t_{n+1}}^c\},
 \eeqnn
 where the first inequality comes from the fact that $a\mapsto\phi(a,t)$ is
 non-decreasing.  By (\ref{ineqnBM}), we have
 $$
{\bf P}\{\Omega_{t_{n+1}}^c\}
 \leq
2\exp\left\{-\frac{g^2(t_{n+1})}{{ 2}t_{n+1}}\right\}
 \leq
2\exp\Big\{-\frac{t_{n+1}^{2\delta-1}}{{2}}\Big\}
 {\leq
2e^{-n}}
 $$
{ for sufficiently large $n\ge 1$. It follows that}
 $$
\sum_{n=1}^{\infty}{\bf P}\{\Omega_{t_{n+1}}^c\}<\infty.
 $$
On the other hand, the fact that $J_t^n(\mbb R)$ is a CSBP starting
from $4g(t_{n+1})$ yields
$${\bf P}\{J_t^n({\mbb R})= 0, \forall\, t_n< t\leq t_{n+1}\}={\bf
P}\{J_{t_n}^n(\mbb R)=0\}=\exp\left\{-4g(t_{n+1})\psi_{t_n}(\infty)
\right\}.$$ We have
$$
1-{\bf P}\{J_t^n({\mbb R})= 0, \forall\, t_n< t\leq t_{n+1}\}\leq
4g(t_{n+1})\psi_{t_n}(\infty).
$$
Moreover,
\begin{equation}\label{est2}
\begin{split}
\sum_{n=m}^\infty g(t_{n+1})\psi_{t_n}(\infty)
 &=
\sum_{n=m}^\infty
g(t_{n+1})\left(\frac{1+\beta}{\gamma\beta t_n}\right)^{1/\beta}\\
 &\leq
c(\ga,\beta)\int_{m+1}^\infty \frac{g(e^x)}{e^{(x-2)/\beta}} dx\\
 &\leq
e^{2/\beta}c(\ga,\beta)\int_{t_{m+1}}^\infty \frac{g(y)}{y^{1+1/\beta}}dy\\
 &<\infty,
\end{split}
\end{equation}
where $c(\ga,\beta) = \frac{1}{\beta}\left(\frac{1+\beta}
{\gamma\beta}\right)^{1/\beta}$.  Therefore, the desired result
follows from Borel-Cantelli lemma.

To show the desired result for any $g$ satisfying (\ref{int}), we
can consider function $g(t)+t^{\de}$ instead. It follows from the previous
result that (\ref{ext}) holds for function $g(t)+t^{\de}$. Then
plainly, it also holds for $g(t)$. \qed
\end{proof}


\begin{Theo}
\label{Main} Assume $X_0(dx)=dx$ and $\gamma>0$. If
$$
\int_1^{\infty}g(y)y^{-1-1/\beta}dy=\infty,
$$
then
$$
\mbb{P}\{\tau=\infty\}=1.
$$
\end{Theo}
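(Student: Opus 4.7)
The plan is to dominate $X$ from below by a thinned SCBM built on well-separated disjoint initial intervals, and then invoke a second Borel--Cantelli argument along a geometric sequence of times. Monotonicity is automatic from the construction (\ref{CH3defX}): if $B\subset\bR$ is Borel and
$\bar X^B_t:=\int_B\int_{{\bf W}_0}w(t)\delta_{\phi(a,t)}N(da,dw)$,
then coupling $X$ and $\bar X^B$ with the same Arratia flow $\phi$ and Poisson measure $N$ gives $\bar X^B_t\le X_t$ as measures almost surely. By Theorem \ref{CH3main}, $\bar X^B$ is itself an SCBM with initial measure $\mathrm{Leb}|_B$. Hence it suffices to construct a set $B$ for which $\tau(\bar X^B)=\infty$ almost surely.

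Set $t_n:=e^n$. Since $g$ is nondecreasing and $\psi_{t_n}(\infty)\sim c\,t_n^{-1/\beta}$, a Riemann-sum comparison with the integral hypothesis yields $\sum_n(g(t_n)-g(t_{n-1}))\psi_{t_n}(\infty)=\infty$; moreover the contribution coming from indices with $g(t_n)-g(t_{n-1})<2\sqrt{t_n}$ is bounded by $C\sum_n t_n^{1/2-1/\beta}<\infty$ for $\beta\le1$, so the divergence survives along the ``good'' subsequence where $g(t_n)-g(t_{n-1})\ge 2\sqrt{t_n}$. Along this subsequence I would place disjoint intervals $J_n$ and $-J_n$ of width $b_n\sim g(t_n)-g(t_{n-1})$ inside $[-g(t_n),g(t_n)]$, arranged so that consecutive intervals are separated by a gap of size at least $\sqrt{(t_n\vee t_{n+1})\log n}$. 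Setting $B:=\bigcup_n(J_n\cup-J_n)$ and $A_n:=\{\bar X^B_{t_n}([-g(t_n),g(t_n)])>0\}$, the construction gives $\sum_nb_n\psi_{t_n}(\infty)=\infty$.

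To lower bound $\bP(A_n)$, consider the piece $\bar X^{J_n}$ of $\bar X^B$ whose initial mass lies in $J_n$. By the additivity of the CSBP, its total mass $\bar X^{J_n}_{t_n}(\bR)$ is distributed as a CSBP of initial value $b_n$, positive with probability $1-\exp(-b_n\psi_{t_n}(\infty))\ge c(1\wedge b_n\psi_{t_n}(\infty))$. On the survival event the surviving mass sits at atoms $\phi(a,t_n)$ for some $a\in J_n$, and since $J_n$ was placed at distance much larger than $\sqrt{t_n}$ from the nearer of $\pm g(t_n)$, a Gaussian tail bound keeps these atoms inside $[-g(t_n),g(t_n)]$ with positive probability. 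Thus $\bP(A_n)\ge c(1\wedge b_n\psi_{t_n}(\infty))$ and $\sum_n\bP(A_n)=\infty$.

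Finally, let $E_n$ denote the event that some Arratia flow thread starting in $J_n$ meets a thread starting in some other $J_m$ before time $t_n\vee t_m$. The standard two-particle coalescence estimate yields $\bP(E_n)\le C\exp(-cd_{nm}^2/(t_n\vee t_m))$ for each pair; the inflated spacing makes $\sum_n\bP(E_n)<\infty$, so $E_n^c$ holds eventually by the first Borel--Cantelli lemma. On the intersection $\bigcap_{n\ge N}E_n^c$ the pieces $\bar X^{J_n}$ decouple and are mutually independent, so applying the second Borel--Cantelli lemma to the independent events $\{\bar X^{J_n}_{t_n}([-g(t_n),g(t_n)])>0\}\subset A_n$ gives $A_n$ i.o.\ a.s., and therefore $\tau(X)\ge\tau(\bar X^B)=\infty$ a.s. The main obstacle I anticipate is the joint calibration of $(b_n)$ and the spacings to satisfy all three constraints simultaneously---divergence of the survival sum, fitting the intervals inside $[-g(t_n),g(t_n)]$, and summability of the coalescence probabilities of $E_n$---particularly delicate in the borderline regime where $g(t_n)$ is only slightly larger than $\sqrt{t_n}$.
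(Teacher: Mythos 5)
Your overall strategy is the paper's: restrict the initial Lebesgue measure to a union of well-separated symmetric interval pairs, use a first Borel--Cantelli argument to show that the mass born in the $n$-th pair neither escapes $[-g(t_n),g(t_n)]$ nor interacts with the other pairs (up to summably small failure probabilities), and then get survival at infinitely many times $t_n$ from a second Borel--Cantelli argument driven by the divergence of $\sum_n b_n\psi_{t_n}(\infty)$. The paper's implementation differs only in bookkeeping: it first reduces to $t^{1/2+\epsilon}\le g(t)\le 3^t$ via Theorem \ref{main-finite}, takes $t_n$ adapted to $g$ (successive tripling times of $g$), and places $I_n=\pm[\frac{31}{30}g(t_{n-1}),\frac{9}{10}g(t_n)]$, so that the widths, the positions and the gaps are all comparable to values of $g$; this makes the three constraints you worry about at the end compatible by construction. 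Your geometric $t_n=e^n$ with a pruned ``good'' subsequence is workable but needs the extra care you anticipate: gaps of order $\sqrt{t_n\log n}$ give only polynomially small coalescence probabilities, so the constant in the exponent matters, whereas gaps of order $g(t_{n-1})\ge t_{n-1}^{1/2+\epsilon}$ make all the error terms stretched-exponentially small in $n$.

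The one step that is not correct as written is the final application of the second Borel--Cantelli lemma. The pieces $\bar X^{J_n}$ of a single SCBM are never mutually independent --- they share one Arratia flow --- and ``on $\bigcap_{n\ge N}E_n^c$ the pieces decouple and are mutually independent'' is not a usable hypothesis: intersecting with, or conditioning on, a global event does not produce independent events, and the second Borel--Cantelli lemma requires genuine independence. The repair, which is exactly what the paper does, is to introduce auxiliary processes: independent coalescing systems $C_n$, one per block, coupled to the global system $C$ so that $C_n(x,t)=C(x,t)$ for $t\le\tau_n$ (the first time a trajectory from block $n$ meets one from a neighbouring block), together with the independent SCBM's $X^n$ built from the $C_n$ and the same Poisson measure $N$. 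The second Borel--Cantelli lemma is then applied to the genuinely independent events $\{X^n_{t_n}(1)>0\}$, and the transfer back to $\bar X$ is a separate first Borel--Cantelli estimate, $\sum_n\bP\{X^n_{t_n}(B_n)>\bar X_{t_n}(B_n)\}<\infty$ (estimate (\ref{eqn3}) of Lemma \ref{KEYlemma1}), controlled by the same flow-line displacement bounds as your events $E_n$. You have all the ingredients for this --- your $E_n$ are precisely the coupling-failure events --- but the argument must be routed through the auxiliary independent systems rather than through a conditional-independence claim.
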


\noindent\textit{Proof.} Delayed. \\ \bigskip

Before proceeding with the proof for Theorem \ref{Main} we first
define the nonnegative and strictly increasing sequences $(t_n),
(l_n)$ and $(r_n)$ as follows. Take $t_0\ge 0$ so that $g(t_0)\ge 1$
and define inductively
 $$
t_{n+1}:=\inf\{t\geq t_n: g(t)\geq 3g(t_n)\},\quad n\geq1.
 $$
 Then
 \beqlb\label{eqng}
g(t_n)\leq g(t_{n+1}-)\leq 3g(t_n)\leq g(t_{n+1}).
 \eeqlb
Define
 $$
r_n:=\frac{9}{10}g(t_n) \text{\,\, and \,\,} l_n:=\frac{31}{30}g(t_{n-1}).
 $$
 By (\ref{eqng}) we have
 \beqlb\label{intervaldiff}
r_n-l_n &=& \frac{3}{10}g(t_n)-\frac{3}{30}g(t_{n-1}) +
\frac{6}{10}g(t_n)-\frac{28}{30}g(t_{n-1}) \cr
 &\geq&
\frac{1}{10}\left(g(t_{n+1}-)-g(t_{n}-)\right)  +
\frac{6}{10}g(t_n)-\frac{28}{30}g(t_{n-1}) \cr
 &\geq&
\frac{1}{10}\left(g(t_{n+1}-)-g(t_{n}-)\right).
 \eeqlb
Let $I_n:=[-r_n, -l_n]\cup[l_n,r_n]$ and $I := \cup_{n=0}^{\infty} I_n$. We
need the following coalescing Brownian systems:
$$C_n=\{C_n(x,t); x\in I_n, t\geq0\}$$ and
$$C=\{C(x,t);x\in I, t\geq0\}$$ such that, for any finite set
$A\subset \mbb R$, both $\{C_n(x,\cdot); x\in A\cap I_n\}$ and
$\{C(x,\cdot); x\in A\cap I\}$ are coalescing Brownian motions
starting from $A\cap I_n$ and $A\cap I$, respectively.

Define
$$
\tau(x,y):=\inf\{t\geq0: C(x,t)=C(y,t)\}.
$$
According to the construction of coalescing Brownian motions on $\mbb R$ in
\cite{Arr79} (see also \cite{FINR04} for a more general model), we may
construct $\{C_n; n\geq1\}$ and $C$ from a countable family of independent
Brownian motions starting from $Q:=\{{i}/{2^k}; i,k\in \mbb Z\}$ such that
$C_n$ and $C_m$ are independent for  $n\neq m$ and
$$
C_n(x,t)=C(x,t)\quad \text{for } x\in I_n \text{ and }  t\leq\tau_n,
$$
where $\tau_n:=\tau(-r_n,-l_{n+1})\wedge \tau(-l_n,-r_{n-1})\wedge
\tau(r_{n-1},l_n)\wedge\tau(r_n,l_{n+1}).$

\bigskip

In the  sequel of this paper, { by $N(da,dw)$ we always denote} a Poisson
random measure on $\mbb R\times{\bf W}_0$ with intensity measure $da{\bf
Q}_{\kappa}(dw)$. For Lebesgue measure $L$ let $\{X_t^n:t\geq0\}$ and
$\{\bar{X}_t:t\geq0\}$ be defined by $X_0^n=1_{I_n}(x)dx$,
$\bar{X}_0=1_{I}(x)dx$ and for $t>0$,
 $$
X_t^n=\int_{I_n}\int_{{\bf W}_0}w(t)\dz_{C_n(a,t)}N(da,dw), \quad
\bar{X}_t=\int_{I}\int_{{\bf W}_0}w(t)\dz_{C(a,t)}N(da,dw).
 $$
Then by Theorem \ref{CH3main}, $\{X_t^n:t\geq0\}$  and $\{\bar{X}_t:t\geq0\}$
are SCBMs starting from $1_{I_n}(x)dx$ and $1_{I}(x)dx$, respectively. We
first prove the following lemma of key estimates. Theorem \ref{Main} will be
easily deduced from the lemma.

\begin{Lemma}
\label{KEYlemma1} Set $B_n:=[-g(t_n), g(t_n)]$. Assume that
$t^{1/2+\ez}\leq g(t)\leq 3^t$ for $t\geq1$ and some
$0<\ez<\frac{1}{2}$. Then we have
\begin{eqnarray}
\ar\ar\label{eqn1}\sum_{n=0}^\infty \mbb{P}\left\{X_{t_n}^n(B_n^c)>0\right\}<\infty;\\
\ar\ar\label{eqn2}\sum_{n=0}^\infty \mbb{P}\left\{X_{t_n}^n(\mbb R)>0\right\}=\infty; \\
 \ar\ar\label{eqn3}\sum_{n=0}^\infty \mbb{P}\left\{X_{t_n}^n(B_n)
>\bar{X}_{t_n}(B_{n})\right\}<\infty.
\end{eqnarray}
\end{Lemma}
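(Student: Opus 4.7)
The plan is to prove the three estimates separately, using the Laplace characterization of Theorem~\ref{CH3main}, first-moment bounds exploiting criticality of the CSBP, and the coupling of $X^n$ with $\bar X$ built into the construction via the common Poisson measure $N$ and the relation $C_n(x,t)=C(x,t)$ for $x\in I_n$, $t\leq\tau_n$. I will use throughout that the one-point motion of Arratia's flow is a standard Brownian motion, so $\mbb{E}[X^n_t(A)]=\int_{I_n}\mbb{P}\{x+B_t\in A\}\,dx$.

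For (\ref{eqn1}) I would apply Markov's inequality, $\mbb{P}\{X^n_{t_n}(B_n^c)>0\}\leq \mbb{E}[X^n_{t_n}(B_n^c)]=\int_{I_n}\mbb{P}\{|x+B_{t_n}|>g(t_n)\}\,dx$, and bound the integrand by $\mbb{P}\{|B_{t_n}|>g(t_n)/10\}$ using $|x|\leq r_n=\frac{9}{10}g(t_n)$ on $I_n$. Combining $|I_n|\leq 2r_n\leq g(t_n)$ with the sharp Gaussian tail $\mbb{P}\{|B_t|>a\}\leq C(\sqrt{t}/a)\exp(-a^2/(2t))$ cancels the factor $g(t_n)$ and, together with $g(t_n)\geq t_n^{1/2+\ez}$, yields a bound of order $\sqrt{t_n}\exp(-c\, t_n^{2\ez})$. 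This is summable since $t_n\geq n$ (a consequence of $g(t_n)\geq 3^n$, obtained by iterating (\ref{eqng}), combined with $g(t)\leq 3^t$).

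For (\ref{eqn2}), letting $f\to\infty\cdot \mathbf{1}_{I_n}$ in (\ref{laplaceX}) gives $\mbb{P}\{X^n_{t_n}(1)=0\}=\exp(-|I_n|\psi_{t_n}(\infty))$. From (\ref{eqng}) and the definitions of $l_n,r_n$, one checks $|I_n|\geq \frac{10}{9}g(t_n)$, while $\psi_{t_n}(\infty)=c(\ga,\be)\,t_n^{-1/\be}$. So it suffices to show $\sum_n g(t_n)t_n^{-1/\be}=\infty$, which follows from $\int_1^\infty g(y)y^{-1-1/\be}dy=\infty$ by the Riemann-sum comparison already used in (\ref{est2}): using $g(t_{n+1}-)\leq 3g(t_n)$ from (\ref{eqng}), the integral $\int_{t_n}^{t_{n+1}}g(y)y^{-1-1/\be}dy$ is bounded above by a constant multiple of $g(t_n)t_n^{-1/\be}$. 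Combined with $1-e^{-x}\geq (x\wedge 1)/2$, this gives divergence of $\sum_n(1-e^{-|I_n|\psi_{t_n}(\infty)})$.

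The most delicate step is (\ref{eqn3}). The coupling key is that on the event $\{\tau_n>t_n\}$, the monotonicity of Arratia's flow forces $C(a,s)=C_n(a,s)$ for every $a\in I_n$ and every $s\leq t_n$: any coalescence under $C$ between an $I_n$-particle and an $(I\setminus I_n)$-particle would, by order-preservation of the flow, have been preceded by the meeting of one of the four adjacent boundary pairs whose minimum is $\tau_n$. Consequently $X^n_{t_n}(B_n)\leq\bar X_{t_n}(B_n)$ on $\{\tau_n>t_n\}$, since $\bar X$ carries additional non-negative contributions from $I\setminus I_n$, so $\{X^n_{t_n}(B_n)>\bar X_{t_n}(B_n)\}\subset\{\tau_n\leq t_n\}$. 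Each of the four meeting times has initial gap at least $\frac{2}{15}g(t_{n-1})$, so the union bound and the standard Gaussian formula for the coalescence time of two Brownian motions give $\mbb{P}\{\tau_n\leq t_n\}\leq C\exp(-c\, g(t_{n-1})^2/t_n)$. The main obstacle is extracting geometric growth of this exponent. For that, applying $g(t)\geq t^{1/2+\ez}$ at $t=t_n-$ together with $g(t_n-)\leq 3g(t_{n-1})$ yields $t_n\leq C g(t_{n-1})^{2/(1+2\ez)}$, hence $g(t_{n-1})^2/t_n\geq c\,g(t_{n-1})^{4\ez/(1+2\ez)}$; iterating $g(t_n)\geq 3g(t_{n-1})$ gives $g(t_{n-1})\geq 3^{n-1}$, so the exponent is bounded below by a geometric sequence in $n$, rendering the sum super-summable.
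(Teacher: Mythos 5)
The one genuine gap is in your argument for (\ref{eqn1}). The inequality $\mbb{P}\{X^n_{t_n}(B_n^c)>0\}\leq\mbb{E}[X^n_{t_n}(B_n^c)]$ is not Markov's inequality: Markov gives $\mbb{P}\{Z\geq1\}\leq\mbb{E}[Z]$, and $\mbb{P}\{Z>0\}\leq\mbb{E}[Z]$ fails for nonnegative variables that can take arbitrarily small positive values -- which is exactly the situation here, since $X^n_{t_n}(B_n^c)$ is a sum of CSBP masses with no positive lower bound. (For instance, a critical CSBP started from a small mass $x$ has mean $x$ at time $t$ but survival probability $1-e^{-x\psi_t(\infty)}\approx x\psi_t(\infty)$, which exceeds $x$ whenever $\psi_t(\infty)>1$.) Two repairs are available. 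The paper's: on the event $\Gamma_n$ that the two extreme flow lines $C_n(\pm r_n,\cdot)$ stay inside $B_n$ up to time $t_n$, monotonicity of the flow confines every atom of $X^n_{t_n}$ to $B_n$, so $\mbb{P}\{X^n_{t_n}(B_n^c)>0\}\leq\mbb{P}\{\Gamma_n^c\}\leq 2\exp\{-cg(t_n)^2/t_n\}$, and no volume factor ever appears. Alternatively, to salvage your first-moment computation, pass through the extinction formula: taking $f=\lambda 1_{B_n^c\cap[-M,M]}$ and letting $\lambda, M\to\infty$ in (\ref{laplaceX}) gives $\mbb{P}\{X^n_{t_n}(B_n^c)>0\}=1-\mbb{E}[\exp\{-\psi_{t_n}(\infty)L_n\}]\leq\psi_{t_n}(\infty)\mbb{E}[L_n]$, where $L_n$ is the Lebesgue measure of $\{a\in I_n: C_n(a,t_n)\notin B_n\}$; here $\mbb{E}[L_n]=\int_{I_n}\mbb{P}\{|a+B_{t_n}|>g(t_n)\}\,da$ is precisely the quantity you estimated, the prefactor $\psi_{t_n}(\infty)$ is bounded for large $n$, and the rest of your computation (Gaussian tail with the $\sqrt{t_n}/g(t_n)$ factor to cancel $|I_n|$, plus $t_n\geq n$) goes through. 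A smaller slip in (\ref{eqn2}): letting $f\to\infty\cdot 1_{I_n}$ in (\ref{laplaceX}) yields $\mbb{P}\{X^n_{t_n}(I_n)=0\}$, not $\mbb{P}\{X^n_{t_n}(1)=0\}$; the identity $\mbb{P}\{X^n_{t_n}(1)=0\}=\exp\{-|I_n|\psi_{t_n}(\infty)\}$ that you actually use is nevertheless correct, because the total mass is a CSBP started from $|I_n|$, which is how the paper states it.

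Your treatments of (\ref{eqn2}) and (\ref{eqn3}) are otherwise sound and close in spirit to the paper's, with mildly different bookkeeping. For (\ref{eqn2}) the paper keeps $r_n-l_n\geq\frac{1}{10}(g(t_{n+1}-)-g(t_n-))$ and uses an Abel summation against the divergent integral, while you lower-bound $|I_n|$ by $\frac{10}{9}g(t_n)$ and compare $\sum_n g(t_n)t_n^{-1/\beta}$ to the integral directly via $g(t_{n+1}-)\leq 3g(t_n)$; both are valid. For (\ref{eqn3}) the paper controls eight one-sided barrier events $\Omega_1,\dots,\Omega_4$ that keep the $I_n$-block inside $[-g(t_n),g(t_n)]$ and the neighbouring blocks outside, whereas you go straight to the inclusion $\{X^n_{t_n}(B_n)>\bar{X}_{t_n}(B_n)\}\subset\{\tau_n\leq t_n\}$ (which is exactly the coupling built into the construction of $C_n$ and $C$) and estimate the four pairwise meeting times; the resulting exponents are comparable, of order $g(t_{n-1})^2/t_n\gtrsim t_n^{2\ez}\geq n^{2\ez}$, which already suffices for summability -- your geometric lower bound on the exponent is stronger than needed.
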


\noindent\textit{Proof.} \textit{Proof for (\ref{eqn1}).} Set
$$
\Gamma_n:=\left\{\inf_{0\leq s\leq t_n} C_n(-r_n,s)\geq -g(t_n)
\right\}\bigcap\left\{ \sup_{0\leq s\leq t_n} C_n(r_n,s)\leq
g(t_n)\right\}.
$$
Then by (\ref{ineqnBM})
 \beqnn
\mbb{P}\{\Gamma_n^c\} &&\leq \mbb{P}\left\{\inf_{0\leq s\leq t_n}
C_n(-r_n,s)\leq -g(t_n)\right\}+{\mbb P}\left\{\sup_{0\leq s\leq
t_n} C_n(r_n,s)\geq
g(t_n)\right\} \\
 &&\leq
2\exp\left\{-\frac{g^2(t_n)}{ 200 t_n}\right\}\leq2
\exp\left\{-\frac{t_n^{2\ez}}{ 200}\right\}.
  \eeqnn
 By (\ref{eqng}) and the
assumption that $g(t)\leq3^t$ for $t>1$, we have
$$\sum_{n=1}^{\infty}\mbb{P}(\Gamma_n^c)<\infty.$$ Then (\ref{eqn1})
follows from
  \beqnn
\mbb{P}\left\{X_{t_n}^n(B_n^c)>0\right\}&=&{\mbb
P}\left\{X_{t_n}^n(B_n^c)>0;
\Gamma_n\right\}+\mbb{P}\left\{X_{t_n}^n(B_n^c)>0;\Gamma_n^c\right\}\\
&\leq& \mbb{P}\{\Gamma_n^c\}.
  \eeqnn

\textit{Proof for (\ref{eqn2})}. Since $X^n_{\cdot}(\mbb R)$ is a branching
process starting from $2(r_n-l_n)$,
 \beqnn
\mbb{P}\{X_{t_n}^n(\mbb R)>0\}&=&1-\exp\left\{-2(r_n-l_n)\psi_{t_n}(\infty)\right\}\\
 &\geq&
\min\left\{\frac{\psi_{t_n}(\infty)}{ 10}
\left(g(t_{n+1}-)-g(t_{n}-)\right),1-e^{-1}\right\},
 \eeqnn
where the inequality is deduced from (\ref{intervaldiff}) and the
elementary inequality $1-e^{-x}\geq  x/2$ for $0\leq x\leq 1$.
 Recall $c(\gamma,\beta)$ in
(\ref{est2}). Then (\ref{eqn2}) follows from
 \beqnn
 &&\sum_{n=1}^{\infty}\psi_{t_n}(\infty)\left(g(t_{n+1}-)-g(t_{n}-)\right)\\&&\quad=
 -\psi_{t_1}(\infty)g(t_1-)
 +\sum_{n=2}^{\infty}g(t_{n+1}-)(\psi_{t_n}(\infty)-\psi_{t_{n+1}}(\infty))\\
 &&\quad=-\frac{1}{t_1}g(t_1-)
 -\sum_{n=2}^{\infty}g(t_{n+1}-)\int_{t_n}^{t_{n+1}}
 \frac{\partial\psi_t(\infty)}{\partial t}dt\\
 &&\quad\geq-\frac{1}{t_1}g(t_1-)
 +c(\gamma,\beta)\sum_{n=2}^{\infty}\int_{t_n}^{t_{n+1}}g(t)
 t^{-1-1/\beta}dt\\
&&\quad=\infty.\eeqnn

\textit{Proof for (\ref{eqn3}).} Set
 $$
\Gamma_1(x,t,y):=\l\{\inf_{0\leq s\leq t}C(x,t)\geq
y\r\}\quad\text{and}\quad \Gamma_2(x,t,y):=\l\{\sup_{0\leq s\leq
t}C(x,t)\leq y\r\}.
 $$
Define
 \beqnn
\Omega_1:&=&\Gamma_1(-r_n,t_n,-g(t_n))\cap\Gamma_2(-l_{n},t_n,-g(t_{n-1}));\\
\Omega_2:&=&\Gamma_1(-r_{n-1},t_n,-g(t_{n-1}))\cap\Gamma_2(r_{n-1},t_n,
g(t_{n-1}));\\
\Omega_3:&=&\Gamma_1(l_n,t_n,g(t_{n-1}))\cap\Gamma_2(r_n,t_n,g(t_{n}));\\
\Omega_4:&=&\Gamma_1(l_{n+1},t_n,g(t_n))\cap
\Gamma_2(-l_{n+1},t_n,-g(t_n)).
 \eeqnn
Then again by (\ref{ineqnBM})
 \beqnn
\mbb{P}\left\{\cup\Omega_i^c\right\}&\leq&
4\exp\l\{-\frac{g^2(t_n)}{{ 1800}t_n}\r\}+4\exp\l\{-\frac{g^2(t_{n-1})}{{1800}t_n}\r\}\\
&\leq&4\exp\left\{-\frac{t_n^{2\ez}}{1800}\right\}+4\exp\left\{-\frac{t_n^{2\ez}}{200}\right\},
 \eeqnn
where the last inequality follows from (\ref{eqng}). Now, we are
ready to deduce (\ref{eqn3}). Note that on $\cap\Omega_i$,
$$
C_n(x,t)=C(x,t)\quad \text{for }t\leq t_n,\,x\in I_n.
$$
It follows that
 \beqnn
 &&\mbb{P}\left\{X_{t_n}^n(B_n) >\bar{X}_{t_n}(B_{n})\right\}\\
 &&\quad\leq
  \mbb{P}\left\{X_{t_n}^n(B_n)
  >\bar{X}_{t_n}(B_{n});\cap\Omega_i\right\}+\mbb{P}\{\cup\Omega_i^c\}\\
  &&\quad\leq 4\exp\left\{-\frac{t_n^{2\ez}}{1800}\right\}+4\exp\left\{-\frac{t_n^{2\ez}}{200}\right\}.
 \eeqnn
We thus obtain (\ref{eqn3}) by applying (\ref{eqng}) again and the
assumption that $g(t)\leq3^t$ for $t>1$. \qed

\medskip

\noindent\textit{Proof for Theorem \ref{Main}.}  { Firstly}, suppose
that $t^{1/2+\ez}\leq g(t)\leq 3^t$ for $t\geq1$ and some
$0<\ez<\frac{1}{2}$. Define
$$
\tau_0:=\sup\{t\geq0: \bar{X}_t\left([-g(t),g(t)]\right)>0\}.
$$
Note that $\{X_{\cdot}^n; n=1,2,\ldots\}$ are independent SCBMs. In
addition, the sequence $(t_n)$ defined after Theorem~\ref{Main} satisfies
$t_n\goto\infty$ since $g$ is increasing and $g(t)\leq e^t$ for all $t>0$. By
(\ref{eqn1}), (\ref{eqn2}) and the Borel-Cantelli lemma, a.s. all but a
finite number of the events $\{X_{t_n}^n(B_n^c)=0\} $ occur and an infinite
number of the events $\{X_{t_n}^n(\mbb R)>0\}$ occur. So a.s. an infinite
number of the events $\{X_{t_n}^n(B_n)>0\}$ occur. But by (\ref{eqn3}) and
the Borel-Cantelli lemma again, a.s. all but a finite number of the events
$\{X_{t_n}^n(B_n)\leq \bar{X}_{t_n}(B_n)\} $ occur. Thus an infinite number
of the events $\{\bar{X}_{t_n}(B_n)>0\}$ occur. This gives $$
\mbb{P}\{\tau_0=\infty\}=1.$$ {Let $X_t$ be defined as (\ref{CH3defX}) with
$X_0=dx$. Define $\{\tilde{X}_t:t\geq0\}$ with $\tilde{X}_0=1_{I}(x)dx$ and
 \beqnn
\tilde{X}_t:=\int_I\int_{{\bf W}_0}w(t)\dz_{\phi(a,t)}N(da,dw),\quad
t>0.
 \eeqnn
 Then $\tilde{X}$ is an SCBM starting from $1_I(x)dx$.} Obviously,
 $$
{X}_t\left([-g(t),g(t)]\right)>\tilde{X}_t\left([-g(t),g(t)]\right).
 $$
Then the fact that $\tilde{X}$ has the same distribution with
$\bar{X}$ yields
 $$
\mbb{P}\{\tau=\infty\}=1.
 $$
For more general $g$ satisfying $\int_1^{\infty}g(y)y^{-1-1/\beta}dy
=\infty$, we can consider function
 \[g_0(y):=(g(y)\wedge 3^y)\vee y^{\frac{1}{2}+\ez}.\]
First, one can check that
$$
\int_1^{\infty}{(g(y)\wedge 3^y)}y^{-1-1/\beta}dy=\infty.
$$
So,
\[\sup\{t\geq0: X_t([-g_0(t), g_0(t)])>0\}=\infty,\,\, \text{a.s.}\]
Meanwhile, according to Theorem \ref{main-finite},
 $$\sup\{t\geq0:
X_t([-t^{\frac{1}{2}+\ez}, t^{\frac{1}{2}+\ez}])>0\}<\infty \quad
\text{a.s.}$$ Then
$$\sup\{t\geq0: X_t([-g(t)\wedge 3^t, g(t)\wedge
3^t])>0\}=\infty\quad \text{a.s.}$$ Then the desired result follows
from $g(t)\geq g(t)\wedge 3^t$. We have thus finished the proof.
\qed

\bigskip
{\bf Acknowlegments}: The authors would like to thank an anonymous referee
for his/her remarks that improved significantly the presentation of the
paper.

\bigskip

\noindent\textbf{\Large References}

\medskip

 \begin{enumerate}

 \renewcommand{\labelenumi}{[\arabic{enumi}]}

\bibitem{Arr79}
R.A. Arratia (1979). Coalescing Brownian motions on the line, Ph.D
thesis, University of Wisconsin-Madison.

 \bibitem{BL00}
J. Bertoin and J.-F. Le Gall (2000). The Bolthausen-Sznitman
coalescent and the genealogy of continuous-state branching
processes. \textit{Probab. Th. Rel. Fields} \textbf{117}, 249--266.

 \bibitem{DL03}
D.A. Dawson and Z. Li (2003). Construction of immigration
superprocesses with dependent spatial motion from one-dimensional
excursions. \textit{Probab. Th. Rel. Fields}  \textbf{127}, 37--61.

  \bibitem{DLZ04}
D.A. Dawson, Z. Li, and X. Zhou (2004). Superprocesses with
coalescing Brownian spatial motion as large scale limits. \textit{J.
Theoret. Probab.} \textbf{17}, 673--692.

\bibitem{FKX06}
K. Fleischmann, A. Klenke, J. Xiong (2006). Pathwise convergence of
a rescaled super-Brownian catalyst reactant process. \textit{J.
Theoret. Probab.} \textbf{19},  557--588.

\bibitem{FINR04} L. R. G. Fontes, M. Isopi, C. M. Newman and K.
    Ravishankar (2004). The Brownian Web: Characterization and
    Convergence. \textit{Ann. Probab.} \textbf{32}, 2857--2883.

\bibitem{GeGl87}
R.K. Getoor and  J. Glover (1987). Constructing Markov processes
with random times of birth and death. In: \textit{Seminar on
Stochastic Processes}  1987, 35--69. Cinlar, E. \textit{et al.} eds.
Birkh\"{a}user Boston, Inc., Boston, MA.

 \bibitem{Isc88}
I. Iscoe  (1988). On the support of measure-valued critical
branching Brownian motion. \textit{Ann. Probab.} \textbf{16},
200--221.

 \bibitem{Li11} Z. Li (2011). \textit{Measure-valued Branching
Markov Processes}. Springer, Berlin.

 \bibitem{LWX04}
Z. Li, H. Wang and J. Xiong (2004). A degenerate stochastic partial
differential equation for superprocesses with singular interaction.
\textit{Probab. Th. Rel. Fields} \textbf{130}, 1--17.

 \bibitem{Zho07}
X. Zhou (2007). A superprocess involving both branching and
coalescing. \textit{Ann. Inst. H. Poincar\'e Probab. Statist.}
\textbf{43}, 599--618.

 \bibitem{Zho08}
X. Zhou (2008). Almost sure local extinction for
$(1+\beta)$-super-Brownian motion with Lebesgue initial measure.
\textit{Stochastic Process. Appl.} \textbf{118}, 1982--1996.

\end{enumerate}

\bigskip\bigskip

\noindent{\small Hui He and Zenghu Li: Laboratory of Mathematics and
Complex Systems, School of Mathematical Sciences, Beijing Normal
University, Beijing 100875, People's Republic of China. \\
\textit{E-mail:} {hehui@bnu.edu.cn} and {lizh@bnu.edu.cn}}

\bigskip

\noindent{\small Xiaowen Zhou: Department of Mathematics and
Statistics, Concordia University, 1455 de Maisonneuve Blvd. West,
Montreal, Quebec, H3G 1M8, Canada. \\
\textit{E-mail:} {xzhou@mathstat.concordia.ca}}

\end{document}